\newtheorem{theorem}{Theorem}[section]
\newtheorem{lemma}[theorem]{Lemma}
\theoremstyle{remark}
\title[Zeckendorf representation of $(a^{-1} \bmod F_n)$]
{Zeckendorf representation of multiplicative inverses modulo a Fibonacci number}
\author[G.~Alecci]{Gessica Alecci}
\address{\parbox{\linewidth}{
Department of Mathematical Sciences, Politecnico di Torino\\
Corso Duca degli Abruzzi 24, 10129 Torino, Italy\\[-8pt]}}
\email{gessica.alecci@polito.it}
\author[N.~Murru]{Nadir Murru}
\address{\parbox{\linewidth}{
Department of Mathematics, Universit\`a degli Studi di Trento\\
Via Sommarive 14, I-38123 Povo (Trento), Italy\\[-8pt]}}
\email{nadir.murru@unitn.it}
\author[C.~Sanna]{Carlo Sanna}
\address{\parbox{\linewidth}{
Department of Mathematical Sciences, Politecnico di Torino\\
Corso Duca degli Abruzzi 24, 10129 Torino, Italy\\[-8pt]}}
\email{carlo.sanna.dev@gmail.com}
\begin{document}

\begin{abstract}
Prempreesuk, Noppakaew, and Pongsriiam determined the Zeckendorf representation of the multiplicative inverse of $2$ modulo $F_n$, for every positive integer $n$ not divisible by~$3$, where $F_n$ denotes the $n$th Fibonacci number. 
We determine the Zeckendorf representation of the multiplicative inverse of $a$ modulo $F_n$, for every fixed integer $a \geq 3$ and for all positive integers $n$ with $\gcd(a, F_n) = 1$.
Our proof makes use of the so-called base-$\varphi$ expansion of real numbers.
\end{abstract}

\maketitle

\section{Introduction}

Let $(F_n)_{n \geq 1}$ be the sequence of Fibonacci numbers, which is defined by the initial conditions $F_1=F_2=1$ and by the linear recurrence $F_n=F_{n-1} + F_{n-2}$ for $n \geq 3$.
It is well known~\cite{Z72} that every positive integer $n$ can be written as a sum of distinct non-consecutive Fibonacci numbers, that is, $n = \sum_{i=1}^m d_i F_i$, where $m \in \mathbb{N}$, $d_i \in \{0, 1\}$, and $d_i d_{i+1} = 0$ for all $i \in \{1, \ldots, m-1\}$. 
This is called the \emph{Zeckendorf representation} of $n$ and, apart from the equivalent use of $F_1$ instead of $F_2$ or vice versa, is unique.

The Zeckendorf representation of integer sequences has been studied in several works. 
For~instance, Filipponi and Freitag~\cite{FF89, FF93} studied the Zeckendorf representation of numbers of the form $F_{kn}/F_n$, $F_n^2/d$ and $L_n^2/d$, where $L_n$ are the Lucas numbers and $d$ is a Lucas or Fibonacci number.
Filipponi, Hart, and Sanchis~\cite{Filipponi, Hart, HS99} analyzed the Zeckendorf representation of numbers of the form $mF_n$.
Filipponi~\cite{Filipponi} determined the Zeckendorf representation of $m F_n F_{n + k}$ and $mL_n L_{n + k}$ for $m \in \{1,2,3,4\}$. 
Bugeaud~\cite{B21} studied the Zeckendorf representation of smooth numbers.
The study of Zeckendorf representations has been also approached from a combinatorial point of view~\cite{AR09, Gerdemann, GR11, W07}. 
Moreover, generalizations of the Zeckendorf representation to linear recurrences other than the sequence of Fibonacci numbers has been considered~\cite{D60, DD14, GT90, GT94, PT89}. 

For all integers $a$ and $m \geq 1$ with $\gcd(a, m) = 1$, let $(a^{-1} \bmod m)$ denote the least positive multiplicative inverse of $a$ modulo $m$, that is, the unique $b \in \{1, \dots, m\}$ such that ${ab \equiv 1} \pmod m$.
Prempreesuk, Noppakaew, and Pongsriiam~\cite{MR4057296} determined the Zeckendorf representation of $(2^{-1} \bmod F_n)$, for every positive integer $n$ that is not divisible by~$3$.
(The condition $3 \nmid n$ is necessary and sufficient to have $\gcd(2, F_n) = 1$.)
In~particular, they showed~\cite[Theorem~3.2]{MR4057296} that
\begin{equation*}
(2^{-1} \bmod F_n) = 
\begin{cases} 
\sum_{k \,=\, 0}^{(n - 7) / 2} F_{n - 3k - 2} + F_3 & \text{ if } n \equiv 1 \pmod 3 ; \\[5pt]
\sum_{k \,=\, 0}^{(n - 8) / 2} F_{n - 3k - 2} + F_4 & \text{ if } n \equiv 2 \pmod 3 ;
\end{cases}
\end{equation*}
for every integer $n \geq 8$.
We extend their result by determining the Zeckendorf representation of the multiplicative inverse of $a$ modulo $F_n$, for every fixed integer $a \geq 3$ and every positive integer $n$ with $\gcd(a, F_n) = 1$.
Precisely, we prove the following result.

\begin{theorem}\label{thm:main}
Let $a \geq 3$ be an integer.
Then there exist integers $M, n_0, i_0 \geq 1$ and periodic sequences $\bm{z}^{(0)}, \dots, \bm{z}^{(M - 1)}$ and $\bm{w}^{(1)}, \dots, \bm{w}^{(i_0)}$ with values in $\{0, 1\}$ such that, for all integers $n \geq n_0$ with $\gcd(a, F_n) = 1$, the Zeckendorf representation of $(a^{-1} \bmod F_n)$ is given by
\begin{equation*}
(a^{-1} \bmod F_n) = \sum_{i \,=\, i_0}^{n - 1} z_{n - i}^{(n \bmod M)} F_i + \sum_{i \,=\, 1}^{i_0 - 1} w_n^{(i)} F_i .
\end{equation*}
\end{theorem}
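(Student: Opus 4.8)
The plan is to first reduce the problem to computing the Zeckendorf representation of an explicit integer of the shape $(1+kF_n)/a$, and then to read off its digits from the base-$\varphi$ expansion of the rational number $k/a$, where $\varphi=(1+\sqrt{5})/2$ and $\psi=(1-\sqrt{5})/2$.

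\emph{Reduction.} Put $b:=(a^{-1}\bmod F_n)\in\{1,\dots,F_n-1\}$. From $ab\equiv 1\pmod{F_n}$ and the bounds on $b$ one gets $ab=1+kF_n$ for a unique integer $k$, which lies in $\{1,\dots,a-1\}$ and is characterized by $kF_n\equiv-1\pmod a$. Since $\gcd(a,F_n)=1$ this $k$ exists, and since the sequence $(F_n\bmod a)_{n\ge1}$ is purely periodic (with the Pisano period $\pi(a)$), the value $k=k_n$ depends only on $n$ modulo $\pi(a)$. I would therefore take $M$ to be the least common multiple of $\pi(a)$ and $2$, the factor $2$ accounting for the parity of $\psi^n$ appearing below, and for each residue $j=n\bmod M$ set $\alpha_j:=k_j/a\in(0,1)$. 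Using $F_n=(\varphi^n-\psi^n)/\sqrt{5}$ this gives $b=\tfrac1a+\tfrac{k_j}{a\sqrt{5}}\,\varphi^n-\tfrac{k_j}{a\sqrt{5}}\,\psi^n$, where the last term is $O(1)$ and alternates in sign with $n$.

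\emph{Base-$\varphi$ expansion.} Every real number in $(0,1)$ has a standard base-$\varphi$ expansion $\sum_{m\ge1}e_m\varphi^{-m}$ with $e_m\in\{0,1\}$ and no two consecutive digits equal to $1$; because $\varphi$ is a quadratic Pisot number, the expansion of a rational number is eventually periodic, and for a rational in $(0,1)$ (whose Galois conjugate is itself, hence lies in the interval forcing pure periodicity) it is in fact periodic. I would expand $\alpha_j=\sum_{m\ge1}e^{(j)}_m\varphi^{-m}$ and let $\bm{z}^{(j)}$ be the resulting periodic digit sequence, so that $z^{(j)}_m=e^{(j)}_m$.

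\emph{Matching the digits.} The heart of the argument is to prove that, for $i_0\le i\le n-1$, the $i$-th Zeckendorf digit of $b$ equals $e^{(j)}_{n-i}=z^{(j)}_{n-i}$. Scaling the base-$\varphi$ identity by $\varphi^n$ gives $\alpha_j\varphi^n=\sum_{i\le n-1}e^{(j)}_{n-i}\varphi^{i}$, so $b$ differs from $\sum_{i}e^{(j)}_{n-i}F_i$ only through the $\tfrac1a$ and $\psi^n$ terms and the discrepancy between $\varphi^i/\sqrt{5}$ and $F_i$, all of which are $O(1)$. I would then run the greedy Zeckendorf algorithm on $b$ and show, by induction from the top, that after removing the leading $n-i$ digits the remainder equals $\alpha_j^{(n-i)}F_i+O(1)$, where $\alpha_j^{(m)}$ is the $m$-th tail of the base-$\varphi$ expansion of $\alpha_j$; the non-consecutiveness of the $e^{(j)}_m$ guarantees that each greedy choice agrees with the corresponding base-$\varphi$ digit as long as $F_i$ dominates the accumulated $O(1)$ error, that is, for $i\ge i_0$.

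\emph{Low-order correction and validity.} For $1\le i<i_0$ the digits of $b$ are governed by the bounded quantity left after the previous step, which is an explicit periodic function of $n$ through $k_j$, the sign of $\psi^n$, and the finitely many transient states of the expansion; this yields the periodic sequences $\bm{w}^{(1)},\dots,\bm{w}^{(i_0)}$. Finally I would verify that the concatenation of the periodic top part with the low-order correction contains no two consecutive $1$'s, so that the resulting expression is genuinely the Zeckendorf representation. I expect the main obstacle to be precisely this last point together with the error control in the matching step: one must bound the $O(1)$ discrepancy uniformly so that the greedy expansion cannot deviate from the base-$\varphi$ digits before index $i_0$, and must rule out a carry crossing the junction $i\approx i_0$ and producing a forbidden pair of consecutive Fibonacci numbers. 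Choosing $i_0$ and $M$ large enough to absorb the transient behaviour and the parity of $\psi^n$ is what makes the clean periodic statement of the theorem possible.
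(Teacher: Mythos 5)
Your overall strategy coincides with the paper's: write $(a^{-1}\bmod F_n)=(1+kF_n)/a$ with $k$ determined by $n\bmod\pi(a)$, expand $k/a$ in base $\varphi$ (purely periodic since $k/a$ is rational), read the high Zeckendorf digits off that expansion, and treat the bottom $i_0$ digits separately. But the two points you yourself flag as ``the main obstacle'' are exactly where the content of the proof lies, and your sketch does not supply the ideas that resolve them. The digit matching cannot be done by bounding an $O(1)$ discrepancy and running the greedy algorithm: two integers differing by $1$ (say $F_m$ and $F_m-1=F_{m-2}+F_{m-4}+\cdots$) have Zeckendorf representations disagreeing in every position, so no uniform error bound by itself prevents the greedy expansion from deviating arbitrarily high up. The argument that works is structural: one has the exact identity $(a^{-1}\bmod F_n)=(x_r+y_n\varphi^{-n})\varphi^n/\sqrt5$ with $x_r=k/a$ and $y_n\in(0,1)$; Lemma~\ref{lem:phi-to-zeck} shows that the Zeckendorf digits of such an integer are exactly the base-$\varphi$ digits of $x_r+y_n\varphi^{-n}$, and Lemma~\ref{lem:base-phi-sum} shows that adding $y_n\varphi^{-n}$ leaves the digits of $x_r$ unchanged up to the last index $\lambda<n$ with $\delta_\lambda(x_r)=\delta_{\lambda+1}(x_r)=0$. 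Such a block of two consecutive zeros occurs in every window of length $\ell+1$ (with $\ell$ the period of $\bm{\delta}(x_r)$) because the expansion contains no two consecutive ones and is not eventually $0,1,0,1,\dots$; this is what permits taking $i_0=\ell+3$. Your appeal to the ``non-consecutiveness of the $e_m$'' points at the wrong property: what you need is guaranteed consecutive zeros, not merely the absence of consecutive ones.

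Second, the periodicity in $n$ of the low-order sequences $\bm{w}^{(1)},\dots,\bm{w}^{(i_0)}$ is asserted rather than proved, and it is not obvious: $y_n=\sqrt5/a-x_r(-\varphi)^{-n}$ is not a periodic function of $n$ (it merely converges to $\sqrt5/a$, so enlarging $M$ to capture the sign of $\psi^n$ does not help), and the digits in question are read at depth $n-i$, which grows with $n$, so no ``finitely many transient states'' argument is immediately available. The paper proves this by a separate device: the remainder $R(n)=(a^{-1}\bmod F_n)-\sum_{i=i_0}^{n-1}z^{(r)}_{n-i}F_i$ is a bounded nonnegative integer, hence it is periodic as soon as it is periodic modulo a single prime $p$ exceeding its range; and modulo $p$ both $(b_rF_n+1)/a$ and the long sum are periodic because $(F_n)$ is periodic mod $p$ and $\bm{z}^{(r)}$ is periodic. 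You would need this, or an equivalent argument, to complete the proof.
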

From the proof of Theorem~\ref{thm:main} it follows that $M, n_0, i_0$, $\bm{z}^{(0)}, \dots, \bm{z}^{(M - 1)}$, and $\bm{w}^{(1)}, \dots, \bm{w}^{(i_0)}$ are effectively computable in terms of $a$.

\subsection*{Acknowledgments}

The authors are members of GNSAGA of INdAM and of CrypTO, the group of Cryptography and Number Theory of Politecnico di Torino.

\section{Preliminaries on Fibonacci numbers}

Let us recall that for every integer $n \geq 1$ it holds the \emph{Binet formula}
\begin{equation*}\label{equ:Binet}
F_n = \frac{\varphi^n - \overline{\varphi}^n}{\sqrt{5}} ,
\end{equation*}
where $\varphi := (1 + \sqrt{5}) / 2$ is the Golden ratio and $\overline{\varphi} := (1 - \sqrt{5}) / 2$ is its algebraic conjugate.
Furthermore, it is well known that for every integer $m \geq 1$ the Fibonacci sequence $(F_n)_{n \geq 1}$ is (purely) periodic modulo $m$.
Let $\pi(m)$ denote its period length, or the so-called \emph{Pisano period}.

The next lemma gives a formula for the inverse of $a$ modulo $F_n$.

\begin{lemma}\label{lem:inverse}
For all integers $a \geq 1$ and $n \geq 3$ with $\gcd(a, F_n) = 1$, we have that
\begin{equation*}
(a^{-1} \bmod F_n) = \frac{b F_n + 1}{a} ,
\end{equation*}
where $b := (-F_r^{-1} \bmod a)$ and $r := (n \bmod \pi(a))$.
\end{lemma}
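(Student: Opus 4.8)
The plan is to produce the right-hand side $x := (bF_n + 1)/a$ explicitly, verify it is a positive integer satisfying the defining congruence $ax \equiv 1 \pmod{F_n}$, and then confirm that $x$ lies in the range $\{1, \dots, F_n\}$, so that by uniqueness it must equal the least positive inverse $(a^{-1} \bmod F_n)$. First I would check that all the quantities involved are well defined. Since $(F_n)_{n \geq 1}$ is purely periodic modulo $a$ with period $\pi(a)$, the reduction $F_n \equiv F_r \pmod a$ holds with $r = (n \bmod \pi(a))$. The hypothesis $\gcd(a, F_n) = 1$ forces $\pi(a) \nmid n$ (otherwise one would get $a \mid F_{\pi(a)} \equiv 0$, hence $a \mid F_n$), so $r \geq 1$ and $F_r$ is genuinely defined. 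As $a \mid (F_n - F_r)$, we get $\gcd(a, F_r) = \gcd(a, F_n) = 1$, whence $F_r$ is invertible modulo $a$ and $b = (-F_r^{-1} \bmod a)$ makes sense.

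From $bF_r \equiv -1 \pmod a$ and $F_n \equiv F_r \pmod a$ I obtain $bF_n \equiv bF_r \equiv -1 \pmod a$, so that $a \mid bF_n + 1$ and $x$ is therefore a positive integer. Integrality together with the identity $ax = bF_n + 1$ gives at once $ax \equiv 1 \pmod{F_n}$, so $x$ is a multiplicative inverse of $a$ modulo $F_n$. It remains only to locate $x$ in the correct range. Being a positive integer, $x \geq 1$; for the upper bound note that $b \in \{1, \dots, a\}$ by definition, and in fact $b \leq a - 1$ when $a \geq 2$, since $b = a$ would force $a \mid aF_n + 1$, i.e. $a \mid 1$. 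Consequently
\begin{equation*}
x = \frac{bF_n + 1}{a} \leq \frac{(a-1)F_n + 1}{a} = F_n - \frac{F_n - 1}{a} < F_n ,
\end{equation*}
using $F_n \geq 2$ for $n \geq 3$. Hence $x \in \{1, \dots, F_n\}$, and since the least positive inverse is the unique element of this set that inverts $a$ modulo $F_n$, we conclude $(a^{-1} \bmod F_n) = x$.

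The substance of the argument is the Pisano-period reduction $F_n \equiv F_r \pmod a$, which is what lets a single residue $b$ depending only on $r = (n \bmod \pi(a))$ control the inverse for all $n$; everything else is bookkeeping. I expect the only points demanding genuine care to be the well-definedness issues grouped in the first paragraph — namely guaranteeing $r \geq 1$ and $\gcd(a, F_r) = 1$ from the hypothesis, plus the degenerate case $a = 1$ — and the range estimate that certifies $x$ as the \emph{least} positive representative rather than merely \emph{some} inverse.
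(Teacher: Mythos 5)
Your proof is correct and follows essentially the same route as the paper's: use the Pisano-period congruence $F_n \equiv F_r \pmod a$ to get $\gcd(a, F_r) = 1$ so that $b$ is well defined, check that $x = (bF_n+1)/a$ is an integer with $ax \equiv 1 \pmod{F_n}$, and bound $x < F_n$ via $b \leq a-1$. The only difference is that you make explicit a couple of well-definedness points ($r \geq 1$ and the degenerate case $a = 1$, which hinges on whether $b$ is taken in $\{0,\dots,a-1\}$ as the paper's inequality $b \leq a-1$ suggests) that the paper leaves implicit.
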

\begin{proof}
Since $r \equiv n \pmod {\pi(a)}$, we have that $F_r \equiv F_n \pmod a$.
In particular, it follows that $\gcd(a, F_r) = \gcd(a, F_n) = 1$.
Hence, $F_r$ is invertible modulo $a$, and consequently $b$ is well defined.
Moreover, we have that
\begin{equation*}
bF_n + 1 \equiv -F_r^{-1} F_r + 1 \equiv 0 \pmod a ,
\end{equation*}
and thus $c := (bF_n + 1) / a$ is an integer.
On the one hand, we have that
\begin{equation*}
ac \equiv b F_n + 1 \equiv 1 \pmod {F_n} .
\end{equation*}
On the other hand, since $b \leq a - 1$ and $n \geq 3$, we have that
\begin{equation*}
0 \leq c \leq \frac{(a - 1) F_n + 1}{a} = F_n - \frac{F_n - 1}{a} < F_n .
\end{equation*}
Therefore, we get that $c = (a^{-1} \bmod F_n)$, as desired.
\end{proof}

\section{Preliminaries on base-$\varphi$ expansion}
We need some basic results regarding the so-called \emph{base-$\varphi$ expansion} of real numbers, which was introduced by Bergman~\cite{MR131384} in 1957 (see also~\cite{MR1573104}), and which is a particular case of non-integer base expansion (see, e.g.,~\cite{MR142719}).
Let $\mathfrak{D}$ be the set of sequences in $\{0, 1\}$ that have no two consecutive terms equal to $1$, and that are not ultimately equal to the periodic sequence $0, 1, 0, 1, \dots$.
Then for every $x \in {[0, 1)}$ there exists a unique sequence $\bm{\delta}(x) = (\delta_i(x))_{i \in \mathbb{N}}$ in $\mathfrak{D}$ such that $x = \sum_{i = 1}^\infty \delta_i(x) \varphi^{-i}$.
Precisely, $\delta_i(x) = \lfloor T^{(i)}(x) \rfloor$ for every $i \in \mathbb{N}$, where $T^{(i)}$ denotes the $i$th iterate of the map $T : {[0, 1)} \to {[0, 1)}$ defined by $T(\hat{x}) := (\varphi \hat{x} \bmod 1)$ for every $\hat{x} \in {[0, 1)}$.
Furthermore, letting $\mathcal{F} := \mathbb{Q}(\varphi) \cap {[0,1)}$, if $x \in \mathcal{F}$ then $\bm{\delta}(x)$ is ultimately periodic.
In~particular, if $x \in \mathcal{F}$ is given as $x = x_1 + x_2 \varphi$, where $x_1, x_2 \in \mathbb{Q}$, then the preperiod and the period of $\bm{\delta}(x)$ can be effectively computed by finding the smallest $i \in \mathbb{N}$ such that $T^{(i)}(x) = T^{(j)}(x)$ for some $j \in \mathbb{N}$ with $j < i$.
Conversely, for every ultimately periodic sequence $\bm{d} = (d_i)_{i \in \mathbb{N}}$ in $\mathfrak{D}$ we have that the number $x = \sum_{i=1}^\infty d_i \varphi^{-i}$ belongs to $\mathcal{F}$, and $x_1, x_2 \in \mathbb{Q}$ such that $x = x_1 + x_2 \varphi$ can be effectively computed in terms of the preperiod and period of $\bm{d}$ by using the formula for the sum of the geometric series. Moreover, in the case that $x$ is a rational number in $[0, 1)$ then $\bm{\delta}(x)$ is purely periodic \cite{S80}. 

The next lemma collects two easy inequalities for sums involving sequences in $\mathfrak{D}$.

\begin{lemma}\label{lem:easy-sums}
For every sequence $(d_i)_{i \in \mathbb{N}}$ in $\mathfrak{D}$ and for every $m \in \mathbb{N} \cup \{\infty\}$, we have:
\begin{enumerate}
\item\label{lem:easy-sums:1} $\sum_{i = 1}^m d_i \varphi^{-i} \in {[0, 1)}$ and
\item\label{lem:easy-sums:2} $\sum_{i = 1}^m d_i (-\varphi)^{-i} \in {(-1, \varphi^{-1})}$.
\end{enumerate}
\end{lemma}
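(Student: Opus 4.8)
The plan is to reduce both inequalities to a comparison of an arbitrary $\mathfrak{D}$-sequence against the two extremal alternating sequences, after recording the two geometric series $\sum_{i \,\mathrm{odd}} \varphi^{-i} = 1$ and $\sum_{i \,\mathrm{even}} \varphi^{-i} = \varphi^{-1}$, both of which follow from the identity $\varphi^{-1} + \varphi^{-2} = 1$ (equivalently $1 - \varphi^{-2} = \varphi^{-1}$). For part~\eqref{lem:easy-sums:1} the lower bound is immediate, since every term $d_i \varphi^{-i}$ is nonnegative. For the upper bound I would exploit the no-two-consecutive-$1$s condition, which forces the sum to be dominated by the alternating sequence $1, 0, 1, 0, \dots$. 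Concretely, set $M := \sup \sum_{i \geq 1} d_i \varphi^{-i}$ over all $\{0,1\}$-sequences with no two consecutive $1$s (finite $m$ being included by padding with zeros), and split on the value of $d_1$: if $d_1 = 0$ the sum is at most $\varphi^{-1} M$, whereas if $d_1 = 1$ then necessarily $d_2 = 0$ and the sum is at most $\varphi^{-1} + \varphi^{-2} M$, the tails being shifted copies of the same problem. This yields $M = \varphi^{-1} + \varphi^{-2} M$, so $M = \varphi^{-1} / (1 - \varphi^{-2}) = 1$. Rerunning the same dichotomy shows the value $1$ is attained only by the sequence with $d_i = 1$ exactly for odd $i$, which is eventually the alternating sequence and hence excluded from $\mathfrak{D}$; therefore every $\mathfrak{D}$-sequence gives a sum strictly below $1$.

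For part~\eqref{lem:easy-sums:2} I would first rewrite $(-\varphi)^{-i} = (-1)^i \varphi^{-i}$, so that the even-indexed terms are nonnegative and the odd-indexed terms nonpositive. The upper bound then follows by discarding the (nonpositive) odd terms and bounding the even ones, giving $\sum_{i=1}^m d_i (-\varphi)^{-i} \leq \sum_{i \,\mathrm{even}} \varphi^{-i} = \varphi^{-1}$, where equality forces $d_i = 1$ for all even $i$ and $d_i = 0$ for all odd $i$, i.e.\ the excluded sequence $0, 1, 0, 1, \dots$; hence the strict bound $< \varphi^{-1}$. Symmetrically, discarding the even terms and bounding the odd ones gives $\sum_{i=1}^m d_i (-\varphi)^{-i} \geq -\sum_{i \,\mathrm{odd}} \varphi^{-i} = -1$, with equality forcing the excluded sequence $1, 0, 1, 0, \dots$, hence $> -1$. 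Note that, unlike in part~\eqref{lem:easy-sums:1}, the no-consecutive-$1$s hypothesis is not needed here: the sign alternation alone delivers both bounds.

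The main obstacle, and the only place any care is genuinely required, is the strictness at the two endpoints. In each case the supremum or infimum equals the stated bound and is attained precisely by one of the two alternating sequences, so the content of the argument is to verify that these equality cases are exactly the sequences removed by the definition of $\mathfrak{D}$. For finite $m$ the strictness is automatic, being a finite truncation of a convergent series with terms of one sign, so the delicate point is really only the case $m = \infty$; there one must pin down the unique extremal sequence, most cleanly via the self-similar relation $M = \varphi^{-1} + \varphi^{-2} M$ together with an induction on the first position at which a candidate sequence disagrees with the alternating pattern.
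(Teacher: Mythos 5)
Your argument is correct, but for part~\eqref{lem:easy-sums:1} it takes a genuinely different route from the paper. The paper exploits the structure of $\mathfrak{D}$ directly: membership in $\mathfrak{D}$ guarantees a first index $k$ with $d_k = d_{k+1} = 0$, and the sum is then split at that index, with the initial block dominated termwise by the odd-position pattern and the tail bounded crudely by $\sum_{i \geq k+2} \varphi^{-i} = \varphi^{-k}$, giving $(1 - \varphi^{-2\lfloor k/2\rfloor}) + \varphi^{-k} \leq 1$ in one line. You instead compute the supremum $M$ over the larger class of all no-two-consecutive-ones sequences via the renewal relation obtained by conditioning on $d_1$, and then identify the unique extremal sequence to get strictness. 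Two small points of care in your version: what the dichotomy literally gives is $M \leq \max\{\varphi^{-1}M,\ \varphi^{-1} + \varphi^{-2}M\}$, so you should first observe $M \leq \sum_i \varphi^{-i} = \varphi < \infty$ and that the branch $M \leq \varphi^{-1}M$ is impossible for $M \geq \varphi^{-1} > 0$, before solving $M \leq \varphi^{-1}/(1-\varphi^{-2}) = 1$; and the extremal sequence you isolate is $1,0,1,0,\dots$, which is excluded from $\mathfrak{D}$ only under the (intended, and necessary for the lemma to hold) reading of ``ultimately equal to $0,1,0,1,\dots$'' as ``ultimately alternating''. Neither issue is a gap, just presentation. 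For part~\eqref{lem:easy-sums:2} your proof is essentially the paper's (drop the terms of one sign, bound the rest by a geometric series, strictness from the excluded alternating sequence), though your equality-case analysis is slightly cleaner: you correctly observe that the no-consecutive-ones hypothesis is not needed there, whereas the paper's one-line justification of strictness implicitly uses it. What the paper's approach to~\eqref{lem:easy-sums:1} buys is brevity and the reuse of the ``two consecutive zeros'' fact, which reappears in Lemma~\ref{lem:base-phi-sum} and in the choice of $\lambda$ in the proof of Theorem~\ref{thm:main}; what yours buys is an exact evaluation of the supremum with a clean characterization of the equality case, which makes the strictness transparent and would adapt to other non-integer bases.
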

\begin{proof}
Since $(d_i)_{i \in \mathbb{N}}$ belongs to $\mathfrak{D}$, there exists $k \in \mathbb{N}$ such that $d_k = d_{k + 1} = 0$.
Let $k$ be the minimum integer with such property.
Then
\begin{align*}
\sum_{i = 1}^\infty d_i \varphi^{-i} &= \sum_{i = 1}^{k - 1} d_i \varphi^{-i} + \sum_{i = k + 2}^{\infty} d_i \varphi^{-i} < \sum_{j = 1}^{\lfloor k / 2\rfloor} \varphi^{-(2j - 1)} + \sum_{i = k + 2}^{\infty} \varphi^{-i} \\
    &= \left(1 - \varphi^{-2 \lfloor k / 2 \rfloor}\right) + \varphi^{-k} \leq 1 ,
\end{align*}
and~(\ref{lem:easy-sums:1}) is proved.
Let us prove~(\ref{lem:easy-sums:2}).
On the one hand, we have
\begin{equation*}
\sum_{i = 1}^m d_i (-\varphi)^{-i} \leq \sum_{j = 1}^m d_{2j} \varphi^{-2j} < \sum_{j = 1}^\infty \varphi^{-2j} = \varphi^{-1} ,
\end{equation*}
where the second inequality is strict because $\mathfrak{D}$ does not contain sequences that are ultimately equal to $(0, 1, 0, 1, \dots)$.
On the other hand, similarly, we have
\begin{equation*}
\sum_{i = 1}^m d_i (-\varphi)^{-i} \geq -\sum_{j = 1}^m d_{2j - 1} \varphi^{-(2j - 1)} > -\sum_{j = 1}^\infty \varphi^{-(2j - 1)} = -1 .
\end{equation*}
Thus~(\ref{lem:easy-sums:2}) is proved.
\end{proof}

The following lemma relates base-$\varphi$ expansion and Zeckendorf representation.

\begin{lemma}\label{lem:phi-to-zeck}
Let $N$ be a positive integer and write $N = x \varphi^m / \sqrt{5}$ for some $x \in \mathcal{F}$ and some integer $m \geq 2$.
Then the Zeckendorf representation of $N$ is given by
\begin{equation*}
N = \sum_{i = 1}^{m - 1} \delta_{m - i}(x) F_i .
\end{equation*}
Moreover, we have $\delta_m(x) = 0$.
\end{lemma}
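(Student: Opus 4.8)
The plan is to start from the given representation $N = x\varphi^m/\sqrt{5}$ with $x = \sum_{i=1}^\infty \delta_i(x)\varphi^{-i}$ and expand $N$ as a real number, then separate the contribution of $\varphi$ from that of $\overline{\varphi}$ by exploiting the Binet formula. First I would write $x = \sum_{i=1}^\infty \delta_i(x)\varphi^{-i}$, so that $N = \frac{1}{\sqrt{5}}\sum_{i=1}^\infty \delta_i(x)\varphi^{m-i}$. The natural idea is to split this sum at the index $i = m$: the terms with $i < m$ should produce the Fibonacci contributions $F_{m-i}$, while the terms with $i \geq m$ (together with the conjugate correction needed to make $F_{m-i}$ appear via Binet) must be shown to vanish or cancel.

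Concretely, I would introduce the conjugate series. Since $F_j = (\varphi^j - \overline{\varphi}^j)/\sqrt{5}$, to turn $\varphi^{m-i}/\sqrt{5}$ into $F_{m-i}$ I must subtract $\overline{\varphi}^{m-i}/\sqrt{5}$. Thus I would write
\begin{equation*}
\sum_{i=1}^{m-1}\delta_i(x)F_i' \quad\text{where}\quad F_{m-i} = \frac{\varphi^{m-i}-\overline{\varphi}^{m-i}}{\sqrt5},
\end{equation*}
and reorganize $N$ as $\sum_{i=1}^{m-1}\delta_{m-i}(x)F_i$ (reindexing $i \mapsto m-i$) plus an error term $E$ collecting everything that was added or left over. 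The key computation is that
\begin{equation*}
E = \frac{1}{\sqrt5}\left(\sum_{i=1}^{m-1}\delta_i(x)\,\overline{\varphi}^{\,m-i} + \sum_{i=m}^\infty \delta_i(x)\,\varphi^{m-i}\right),
\end{equation*}
and I must show $E = 0$. Factoring $\varphi^m$ and $\overline{\varphi}^{\,m}$ respectively, this becomes a statement about $\sum_i \delta_i(x)\overline{\varphi}^{-i}$ and $\sum_{i\geq m}\delta_i(x)\varphi^{-i}$; the point is that $N$ is a rational integer, so its conjugate over $\mathbb{Q}(\sqrt5)$ equals $N$ itself, and the difference $N - \sum_{i=1}^{m-1}\delta_{m-i}(x)F_i$ is forced to be an integer lying in a narrow interval.

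The cleanest route, and the one I would pursue, is to bound $E$ and show it is an integer lying in $(-1,1)$, hence zero. Here is where Lemma~\ref{lem:easy-sums} does the work. The tail $\sum_{i=m}^\infty\delta_i(x)\varphi^{m-i}=\sum_{j=0}^\infty\delta_{m+j}(x)\varphi^{-j}$ is controlled by part~(\ref{lem:easy-sums:1}) (after shifting), while the conjugate sum $\sum_{i=1}^{m-1}\delta_i(x)\overline{\varphi}^{\,m-i}$, upon factoring and using $\overline{\varphi}=-\varphi^{-1}$, reduces to a sum of the form $\sum d_i(-\varphi)^{-i}$ that is bounded by part~(\ref{lem:easy-sums:2}) into $(-1,\varphi^{-1})$. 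Combining these bounds with the condition $m \geq 2$ should pin the total error into $(-1,1)$; since $N - \sum_{i=1}^{m-1}\delta_{m-i}(x)F_i$ is an algebraic integer that is fixed by conjugation (being rational), it is an ordinary integer, and an integer in $(-1,1)$ must be $0$. This yields the displayed formula.

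The main obstacle will be the careful handling of the conjugate series and the sign bookkeeping: because $\overline{\varphi}$ is negative, the conjugate sum is an alternating-type series, and I must apply part~(\ref{lem:easy-sums:2}) with the correct indices to get a two-sided bound rather than a one-sided one. Getting the endpoints exactly right — ensuring the combined error stays strictly inside $(-1,1)$ and does not touch an endpoint (which is where the exclusion of the $0,1,0,1,\dots$ tail in $\mathfrak{D}$ becomes essential) — is the delicate part. Finally, the claim $\delta_m(x)=0$ should follow as a byproduct: the Zeckendorf representation of $N$ uses only $F_1,\dots,F_{m-1}$, so the coefficient indexed by $i=m$ (equivalently $\delta_m(x)$, the coefficient of the would-be $F_0$ term) cannot contribute, and a short argument using the non-consecutiveness in $\mathfrak{D}$ together with the established vanishing of $E$ should force $\delta_m(x)=0$.
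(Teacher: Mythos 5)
Your overall architecture is the paper's: expand $\sqrt{5}N = \sum_i \delta_i(x)\varphi^{m-i}$, convert the head of the series into Fibonacci numbers via Binet, and show the leftover $E = N - \sum_{i=1}^{m-1}\delta_{m-i}(x)F_i$ is an integer trapped in $(-1,1)$ using Lemma~\ref{lem:easy-sums}. Your formula for $E$ is exactly the paper's remainder $R$. But there is a genuine gap in the step ``combining these bounds \ldots should pin the total error into $(-1,1)$'': it does not, because of the single digit $\delta_m(x)$. That digit contributes the term $\delta_m(x)\varphi^{0}=\delta_m(x)$ to $\sqrt{5}E$ (it sits at the junction of your two sums, in the tail $\sum_{i\geq m}\delta_i(x)\varphi^{m-i}$). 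Applying Lemma~\ref{lem:easy-sums}(\ref{lem:easy-sums:2}) to the conjugate head and Lemma~\ref{lem:easy-sums}(\ref{lem:easy-sums:1}) to the tail gives only
\begin{equation*}
\sqrt{5}\,E \in \bigl(\delta_m(x)-1,\ \delta_m(x)+\varphi^{-1}+1\bigr),
\end{equation*}
and when $\delta_m(x)=1$ the upper endpoint is $2+\varphi^{-1}\approx 2.618 > \sqrt{5}$, so $E=1$ is not excluded and the ``integer in $(-1,1)$'' argument fails. Moreover, your plan to recover $\delta_m(x)=0$ ``as a byproduct of the established vanishing of $E$'' is circular: the vanishing of $E$ cannot be established until $\delta_m(x)=1$ has been ruled out.

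The paper closes this gap with a short contradiction argument that you would need to add. Assume $\delta_m(x)=1$; then $\delta_{m+1}(x)=0$ because sequences in $\mathfrak{D}$ have no two consecutive ones, so the tail sharpens to $\sum_{i\geq 2}\delta_{i+m}(x)\varphi^{-i}\in[0,\varphi^{-1})$ and one gets $\sqrt{5}E\in(0,\,1+2\varphi^{-1})=(0,\sqrt{5})$, i.e.\ $E\in(0,1)$, impossible for an integer. This simultaneously proves the ``moreover'' claim $\delta_m(x)=0$ and reduces the bound to $\sqrt{5}E\in(-1,\,1+\varphi^{-1})\subseteq(-\sqrt{5},\sqrt{5})$, which is where your $(-1,1)$ conclusion actually becomes valid. (Incidentally, the appeal to Galois conjugation to see that $E$ is an integer is unnecessary: $N$ and the $F_i$ are integers and the digits are $0$ or $1$.) You correctly flagged the endpoint analysis as the delicate point, but you located the delicacy in the exclusion of the $0,1,0,1,\dots$ tail rather than in the $\delta_m(x)$ digit, which is where the argument as proposed would break.
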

\begin{proof}
Let $R := N - \sum_{i = 1}^{m - 1} \delta_{m - i} (x) F_i$.
We have to prove that $R = 0$.
Since $R$ is an integer, it suffices to show that $|R| < 1$.
We have
\begin{align*}
\sqrt{5} N &= x \varphi^m = \sum_{i = 1}^\infty \delta_i(x) \varphi^{m - i} = \sum_{i = 1}^m \delta_i(x) \varphi^{m - i} + \sum_{i = m + 1}^\infty \delta_i(x) \varphi^{m - i} \\
 &= \sum_{i = 0}^{m - 1} \delta_{m - i}(x) \varphi^i + \sum_{i = 1}^\infty \delta_{i + m}(x) \varphi^{-i} \\
 &= \sum_{i = 0}^{m - 1} \delta_{m - i}(x) (\varphi^i - \overline{\varphi}^i) + \sum_{i = 0}^{m - 1} \delta_{m - i}(x) \overline{\varphi}^i + \sum_{i = 1}^\infty \delta_{i + m}(x) \varphi^{-i} \\
 &= \sqrt{5} \sum_{i = 1}^{m - 1} \delta_{m - i}(x) F_i + \sum_{i = 0}^{m - 1} \delta_{m - i}(x) (-\varphi)^{-i} + \sum_{i = 1}^\infty \delta_{i + m}(x) \varphi^{-i} .
\end{align*}
Hence, we get that
\begin{equation*}
\sqrt{5} R = \sum_{i = 0}^{m - 1} \delta_{m - i}(x) (-\varphi)^{-i} + \sum_{i = 1}^\infty \delta_{i + m}(x) \varphi^{-i} .
\end{equation*}

For the sake of contradiction, suppose that $\delta_m(x) = 1$. Then $\delta_{m+1}(x) = 0$ and, by Lemma~\ref{lem:easy-sums}, it follows that
\begin{equation*}
\sqrt{5} R = 1 + \sum_{i = 1}^{m - 1} \delta_{m - i}(x) (-\varphi)^{-i} + \sum_{i = 2}^\infty \delta_{i + m}(x) \varphi^{-i} \in (1 - 1 + 0, 1 + \varphi^{-1} + \varphi^{-1}) = (0, \sqrt{5}) ,
\end{equation*}
which is a contradiction, since $R$ is an integer.

Therefore, $\delta_m(x) = 0$ and, again by Lemma~\ref{lem:easy-sums}, we have
\begin{equation*}
\sqrt{5} R = \sum_{i = 1}^{m - 1} \delta_{m - i}(x) (-\varphi)^{-i} + \sum_{i = 1}^\infty \delta_{i + m}(x) \varphi^{-i} \in (-1 + 0, \varphi^{-1} + 1) \subseteq (-\sqrt{5}, \sqrt{5}) ,
\end{equation*}
so that $|R| < 1$, as desired.
\end{proof}

The next two lemmas regards the base-$\varphi$ expansions of the sum of two numbers.

\begin{lemma}\label{lem:base-phi-sum}
Let $x, y \in {[0, 1)}$, $m \in \mathbb{N}$, and put $v := x + y \varphi^{-m}$.
Suppose that there exists $\lambda \in \mathbb{N}$ such that $\lambda + 2 \leq m$ and $\delta_\lambda(x) = \delta_{\lambda + 1}(x) = 0$.
Then, putting
\begin{equation*}
w := \sum_{i = \lambda + 2}^\infty \delta_i(x) \varphi^{-i} + \sum_{i = m + 1}^\infty \delta_{i - m}(y) \varphi^{-i} ,
\end{equation*}
we have that $v, w \in {[0, 1)}$ and
\begin{equation}\label{equ:digits-sum}
\delta_i(v) = \begin{cases}
    \delta_i(x) &\text{ if } i \leq \lambda , \\
    \delta_i(w) &\text{ if } i > \lambda ,
    \end{cases}
\end{equation}
for every $i \in \mathbb{N}$.
\end{lemma}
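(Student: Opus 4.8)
The plan is to reduce the statement to the uniqueness of the base-$\varphi$ expansion. First I would split the expansion of $x$ at the gap created by the two consecutive zero digits: since $\delta_\lambda(x) = \delta_{\lambda + 1}(x) = 0$, we have $x = \sum_{i = 1}^{\lambda - 1} \delta_i(x) \varphi^{-i} + \sum_{i = \lambda + 2}^\infty \delta_i(x) \varphi^{-i}$, while $y \varphi^{-m} = \sum_{i = m + 1}^\infty \delta_{i - m}(y) \varphi^{-i}$ is exactly the second sum appearing in $w$. Hence $v = \sum_{i = 1}^{\lambda - 1} \delta_i(x) \varphi^{-i} + w$. I would then introduce the candidate digit sequence $(d_i)_{i \in \mathbb{N}}$ defined by $d_i := \delta_i(x)$ for $i \leq \lambda$ and $d_i := \delta_i(w)$ for $i > \lambda$, with the aim of showing that $(d_i) \in \mathfrak{D}$ and $\sum_{i = 1}^\infty d_i \varphi^{-i} = v$; the conclusion~\eqref{equ:digits-sum} then follows at once from the uniqueness of $\bm{\delta}(v)$.

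The heart of the argument, and the step I expect to be the main obstacle, is to show that adding $y \varphi^{-m}$ produces no carry reaching position $\lambda$ or below, which I would phrase quantitatively as $w < \varphi^{-\lambda}$. To obtain this I would bound the two sums defining $w$ separately. The tail $\sum_{i = \lambda + 2}^\infty \delta_i(x) \varphi^{-i}$ equals $\varphi^{-(\lambda + 1)} \sum_{k = 1}^\infty \delta_{k + \lambda + 1}(x) \varphi^{-k}$, and since the shifted sequence $(\delta_{k + \lambda + 1}(x))_k$ lies in $\mathfrak{D}$, Lemma~\ref{lem:easy-sums}(\ref{lem:easy-sums:1}) bounds it by $\varphi^{-(\lambda + 1)}$; meanwhile $y \varphi^{-m} < \varphi^{-m} \leq \varphi^{-(\lambda + 2)}$ because $y < 1$ and $m \geq \lambda + 2$. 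Adding these and using $\varphi^{-(\lambda + 1)} + \varphi^{-(\lambda + 2)} = \varphi^{-(\lambda + 2)}(\varphi + 1) = \varphi^{-\lambda}$ gives $w < \varphi^{-\lambda}$, and in particular $w \in {[0, 1)}$. Consequently $\delta_j(w) = 0$ for every $j \leq \lambda$: otherwise the single term $\delta_j(w) \varphi^{-j}$ would already force $w \geq \varphi^{-j} \geq \varphi^{-\lambda}$, a contradiction. This is precisely where both hypotheses are used, namely the double zero at positions $\lambda, \lambda + 1$ and the separation $m \geq \lambda + 2$.

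It remains to verify the two properties of $(d_i)$. For admissibility, the block $d_1, \dots, d_\lambda$ is a prefix of $\bm{\delta}(x)$ (so it has no two consecutive $1$'s and satisfies $d_\lambda = 0$), while $d_{\lambda + 1}, d_{\lambda + 2}, \dots$ is a suffix of $\bm{\delta}(w)$; since $d_\lambda = 0$ no forbidden pair can arise at the junction, and as the tail coincides with that of $\bm{\delta}(w)$ the sequence is not ultimately $(0, 1, 0, 1, \dots)$, so $(d_i) \in \mathfrak{D}$. For the value, the vanishing $\delta_j(w) = 0$ for $j \leq \lambda$ gives $\sum_{i = \lambda + 1}^\infty \delta_i(w) \varphi^{-i} = w$, whence $\sum_{i = 1}^\infty d_i \varphi^{-i} = \sum_{i = 1}^{\lambda - 1} \delta_i(x) \varphi^{-i} + w = v$. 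Lemma~\ref{lem:easy-sums}(\ref{lem:easy-sums:1}) then forces $v \in {[0, 1)}$, and the uniqueness of the base-$\varphi$ expansion yields $\bm{\delta}(v) = (d_i)$, which is exactly~\eqref{equ:digits-sum}; along the way we have also recorded $v, w \in {[0, 1)}$, completing the plan.
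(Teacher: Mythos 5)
Your proposal is correct and follows essentially the same route as the paper's proof: bound $w < \varphi^{-(\lambda+1)} + \varphi^{-(\lambda+2)} = \varphi^{-\lambda}$ via Lemma~\ref{lem:easy-sums}(\ref{lem:easy-sums:1}) so that $\delta_j(w)=0$ for $j \leq \lambda$, rewrite $v$ as the concatenated expansion, and conclude by uniqueness of the base-$\varphi$ expansion. You merely make explicit two points the paper leaves implicit (the admissibility of the concatenated digit sequence and the invocation of uniqueness), which is fine.
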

\begin{proof}
From Lemma~\ref{lem:easy-sums}(\ref{lem:easy-sums:1}), we have that
\begin{equation*}
0 \leq w < \varphi^{-(\lambda + 1)} + \varphi^{-m} < \varphi^{-(\lambda + 1)} + \varphi^{-(\lambda + 2)} = \varphi^{-\lambda} .
\end{equation*}
Hence, $w \in {[0,\varphi^{-\lambda})} \subseteq {[0, 1)}$ and so $w = \sum_{i = \lambda + 1}^\infty \delta_i(w) \varphi^{-i}$.
Therefore, recalling that $\delta_{\lambda + 1}(x) = 0$, we get that
\begin{align*}
v &= x + y\varphi^{-m} = \sum_{i = 1}^\infty \delta_i(x) \varphi^{-i} + \sum_{i = 1}^\infty \delta_i(y) \varphi^{-i-m} = \sum_{i = 1}^\infty \delta_i(x) \varphi^{-i} + \sum_{i = m + 1}^\infty \delta_{i-m}(y) \varphi^{-i} \\
    &= \sum_{i = 1}^{\lambda} \delta_i(x) \varphi^{-i} + w = \sum_{i = 1}^{\lambda} \delta_i(x)\varphi^{-i} + \sum_{i = \lambda + 1}^{\infty} \delta_i(w)\varphi^{-i} ,
\end{align*}
which is the base-$\varphi$ expansion of $v$. 
(Note that $\delta_\lambda(x) = 0$.)
In~particular, by Lemma~\ref{lem:easy-sums}(\ref{lem:easy-sums:1}), we have that $v \in {[0, 1)}$.
Thus~\eqref{equ:digits-sum} follows.
\end{proof}

\section{Proof of Theorem~\ref{thm:main}}

Fix an integer $a \geq 3$.
Let us begin by defining $M, n_0, i_0$, and $\bm{z}^{(0)}, \dots, \bm{z}^{(M - 1)}$.
Put $M := \pi(a)$.
For each $r \in \{0, \dots, M - 1\}$ with $\gcd(a, F_r) = 1$, let $b_r := (-F_r^{-1} \bmod a)$, $x_r := b_r / a$, and $\bm{z}^{(r)} := \bm{\delta}(x_r)$.
Note that $x_r \in (0, 1)$.
Since $x_r$ is a positive rational number, we have that $\bm{z}^{(r)}$ is a (purely) periodic sequence belonging to $\mathfrak{D}$.
Let $\ell$ be the least common multiple of the period lengths of $\bm{z}^{(0)}, \dots, \bm{z}^{(M - 1)}$, and put $i_0 := \ell + 3$.
Finally, let $n_0 := \max\{i_0 + 1, \lceil \log(2a) / \!\log\varphi\rceil\}$.

Pick an integer $n \geq n_0$ with $\gcd(a, F_n) = 1$ and, for the sake of brevity, put $r := (n \bmod M)$.
From Lemma~\ref{lem:inverse} and Binet's formula~\eqref{equ:Binet}, we get that
\begin{equation}\label{equ:proof1}
(a^{-1} \bmod F_n) = \frac{b_r F_n + 1}{a} = \frac{b_r(\varphi^n - \overline{\varphi}^n)}{\sqrt{5} a} + \frac1{a} = (x_r + y_n \varphi^{-n}) \frac{\varphi^n}{\sqrt{5}} ,
\end{equation}
where 
\begin{equation*}
y_n := \frac{\sqrt{5}}{a} - x_r (-\varphi)^{-n} .
\end{equation*}
Since $n \geq n_0$, it follows that $y_n \in (0, 1)$ and $x_r + y_n \varphi^{-n} \in (0, 1)$ .
Therefore, from~\eqref{equ:proof1} and Lemma~\ref{lem:phi-to-zeck}, we get that
\begin{equation*}
(a^{-1} \bmod F_n) = \sum_{i \,=\, 1}^{n - 1} \delta_{n - i}(x_r + y_n \varphi^{-n}) F_i .
\end{equation*}
Since $\bm{\delta}(x_r)$ is (purely) periodic and belongs to $\mathfrak{D}$, we have that $\bm{\delta}(x_r)$ contains infinitely many pairs of consecutive zeros.
Furthermore, since the period length of $\bm{\delta}(x_r)$ is at most $\ell$, we have that among every $\ell+1$ consecutive terms of $\bm{\delta}(x_r)$ there are two consecutive zero.
In~particular, there exists $\lambda = \lambda(r)$ such that $n - \ell - 3 \leq \lambda \leq n - 2$ and $\delta_{\lambda}(x_r) = \delta_{\lambda + 1}(x_r) = 0$.
Consequently, by Lemma~\ref{lem:base-phi-sum}, we get that $\delta_i(x_r + y_n \varphi^{-n}) = \delta_i(x_r)$ for each positive integer $i \leq \lambda$ and, a fortiori, for each positive integer $i \leq n - i_0$.
Therefore, we have that
\begin{align}\label{equ:proof2}
(a^{-1} \bmod F_n) &= \sum_{i \,=\, i_0}^{n - 1} \delta_{n - i}(x_r) F_i + \sum_{i \,=\, 1}^{i_0 - 1} \delta_{n - i}(x_r + y_n \varphi^{-n}) F_i \\
    &= \sum_{i \,=\, i_0}^{n - 1} z_{n - i}^{(r)} F_i + \sum_{i \,=\, 1}^{i_0 - 1} w_n^{(i)} F_i , \nonumber 
\end{align}
where $\bm{w}^{(1)}, \cdots, \bm{w}^{(i_0)}$ are the sequences defined by $w_n^{(i)} := \delta_{n - i}(x_r + y_n \varphi^{-n})$.
Note that, by construction,
\begin{equation*}
z_1^{(r)}, z_2^{(r)}, \dots, z_{n - i_0}^{(r)}, w_n^{(i_0-1)}, w_n^{(i_0-2)}, \dots, w_n^{(1)}
\end{equation*}
is a string in $\{0,1\}$ with no consecutive zeros.
Hence, \eqref{equ:proof2} is the Zeckendorf representation of $(a^{-1} \bmod F_n)$.

It remains only to prove that $\bm{w}^{(1)}, \cdots, \bm{w}^{(i_0)}$ are periodic.
By~\eqref{equ:proof2} and the uniqueness of the Zeckendorf representation, it suffices to prove that
\begin{equation}\label{equ:proof3}
R(n) := (a^{-1} \bmod F_n) - \sum_{i \,=\, i_0}^{n - 1} z_{n - i}^{(r)} F_i = \sum_{i \,=\, 1}^{i_0 - 1} w_n^{(i)} F_i .
\end{equation}
is a periodic function of $n$.
From the last equality in~\eqref{equ:proof3}, we have that $0 \leq R(n) < \sum_{i \,=\, 1}^{i_0 - 1} F_i$.
(Actually, one can prove that $0 \leq R(n) < F_{i_0}$, but this is not necessary for our proof.)
Fix a prime number $p > \max\{a, \sum_{i \,=\, 1}^{i_0 - 1} F_i\}$.
It suffices to prove that $R(n)$ is periodic modulo $p$.
Recalling that $(a^{-1} \bmod F_n) = (b_r F_n + 1) / a$ and that the sequence of Fibonacci numbers is periodic modulo $p$, it follows that $(a^{-1} \bmod F_n)$ is periodic modulo $p$.
Hence, it suffices to prove that $R^\prime(n) := \sum_{i = i_0}^{n - 1} z_{n - i}^{(r)} F_i$ is periodic modulo $p$.
Using that $\bm{z}^{(r)}$ has period length dividing $\ell$, we get that
\begin{align*}
R^\prime(n + \ell M) - R^\prime(n) &= \sum_{i \,=\, i_0}^{n + \ell M - 1} z_{n + \ell M - i}^{((n + \ell M) \bmod M)} F_i - \sum_{i = i_0}^{n - 1} z_{n - i}^{(r)} F_i \\
    &= \sum_{i \,=\, i_0}^{n + \ell M - 1} z_{n + \ell M - i}^{(r)} F_i - \sum_{i \,=\, i_0}^{n - 1} z_{n - i}^{(r)} F_i \\
    &= \sum_{i \,=\, n}^{n + \ell M - 1} z_{n + \ell M - i}^{(r)} F_i + \sum_{i \,=\, i_0}^{n - 1} (z_{n + \ell M - i}^{(r)} - z_{n - i}^{(r)}) F_i \\
    &= \sum_{j \,=\, 1}^{\ell M} z_{j}^{(r)} F_{n + \ell M - j} ,
\end{align*}
which is a linear combination of sequences that are periodic modulo $p$.
Hence $R^\prime(n)$ is periodic modulo $p$.
The proof is complete.


\begin{thebibliography}{10}

\bibitem{AR09}
J.~Artz and M.~Rowell, \emph{A tiling approach to Fibonacci product identities}, Involve \textbf{2(5)} (2009), 581--587.

\bibitem{MR131384}
G.~Bergman, \emph{A number system with an irrational base}, Math. Mag.
  \textbf{31} (1957/58), 98--110.

\bibitem{B21}
Y.~Bugeaud, \emph{On the Zeckendorf representation of smooth numbers}, Moscow Math. J. \textbf{21(1)} (2021), 31--42.

\bibitem{D60}
D.~E.~Daykin, \emph{Representation of natural numbers as sums of generalized Fibonacci numbers}, J. London Mathematical Society \textbf{35} (1960), 143--160.

\bibitem{DD14}
P.~Demontigny, T.~Do, and A.~Kulkarni, S. J. Miller, D. Moon, U. Varma, \emph{Generalizing Zeckendorf’s Theorem to f-decompositions}, J. Number Theory \textbf{141} (2014), 136--158.

\bibitem{FF89}
P.~Filipponi and H.~T.~Freitag. \emph{On the $F$-Representation of Integral Sequences $\{F_n^2/d\}$ and $\{L_n^2/d\}$ where $d$ is Either a Fibonacci or a Lucas Number}, Fibonacci Quart. \textbf{27.3} (1989), 276--282.

\bibitem{FF93}
P.~Filipponi and H.~T.~Freitag, \emph{The Zeckendorf Representation of $\{F_{kn}/F_n\}$}, Applications of Fibonacci Numbers \textbf{5} (1993), 217--19.

\bibitem{Filipponi} 
P.~Filipponi and E.~L.~Hart, \emph{The Zeckendorf decomposition of certain Fibonacci-Lucas products}, Fibonacci Quart. \textbf{36.3} (1998), 240--247.

\bibitem{Gerdemann}
D.~Gerdemann, \emph{Combinatorial proofs of Zeckendorf family identities}, Fibonacci Quart. \textbf {46/47} (2009), 249--261.

\bibitem{GT90}
P.~J.~Grabner and R.~F.~Tichy, \emph{Contributions to digit expansions with respect to linear recurrences}, J. Number Theory \textbf{35} (1990), 160--169.

\bibitem{GT94}
P.~J.~Grabner and R.~F.~Tichy, \emph{Generalized Zeckendorf expansions}, Applied Mathematics Letters \textbf{7(2)} (1994), 25--28.

\bibitem{GR11}
D.~McGregor and M.~J.~Rowell, \emph{Combinatorial proofs of Zeckendorf representations of Fibonacci and Lucas products}, Involve \textbf{4(1)} (2011), 75--89.

\bibitem{Hart} 
E.~L.~Hart, \emph{On Using Patterns in Beta-Expansions To Study Fibonacci-Lucas Products}, Fibonacci Quart. \textbf{36} (1998), 396--406.

\bibitem{HS99}
E.~Hart and L.~Sanchis, \emph{On the occurence of $F_n$ in the Zeckendorf decomposition of $n F_n$}, Fibonacci Quart. \textbf{37} (1999), 21--33.

\bibitem{MR142719}
W.~Parry, \emph{On the {$\beta$}-expansions of real numbers}, Acta Math. Acad.
  Sci. Hungar. \textbf{11} (1960), 401--416.

\bibitem{PT89}
A.~Peth\"{o} and R.~F. Tichy, \emph{On digit expansions with respect to linear recurrences}, J. Number Th. \textbf{33} (1989), 243--256.

\bibitem{MR4057296}
B.~Prempreesuk, P.~Noppakaew, and P.~Pongsriiam, \emph{Zeckendorf
  representation and multiplicative inverse of {$F_m\bmod F_n$}}, Int. J. Math.
  Comput. Sci. \textbf{15} (2020), no.~1, 17--25.

\bibitem{MR1573104}
C.~Rousseau, \emph{The phi number system revisited}, Math. Mag. \textbf{68}
  (1995), no.~4, 283--284.

\bibitem{S80} K.~Schmidt, \emph{On periodic expansions of Pisot numbers and Salem numbers}, Bull. London Math. Soc. \textbf{12}
(1980), 269--278.  

\bibitem{W07}
P.~M.~Wood, \emph{Bijective proofs for Fibonacci identities related to Zeckendorf's theorem}, Fibonacci Quart. \textbf{45(2)} (2007), 138--145.

\bibitem{Z72} E.~Zeckendorf. \emph{Répresentation des nombres naturels par une somme de nombres de Fibonacci ou de nombres de Lucas}, Bull. Soc. Roy. Sci. Liege \textbf{41} (1972), 179--82.

\end{thebibliography}
\end{document}